\documentclass{amsart}

\usepackage[T1]{fontenc}
\usepackage[utf8]{inputenc}
\usepackage{lmodern}

\theoremstyle{plain}
\newtheorem{theorem}{Theorem}[section]
\newtheorem{corollary}[theorem]{Corollary}
\newtheorem{claim}[theorem]{Claim}

\usepackage{mathtools}
\usepackage{enumitem}
\usepackage{graphicx}
\usepackage{float}
\usepackage{dsfont}

\usepackage{algorithm}
\usepackage{algpseudocode}

\usepackage{microtype}

\usepackage[colorlinks=true,linktocpage=true]{hyperref}
\usepackage[nameinlink]{cleveref}

\newcommand{\floor}[1]{\left\lfloor #1 \right\rfloor}
\newcommand{\ceil}[1]{\left\lceil #1 \right\rceil}
\newcommand{\ind}{\mathds{1}}

\title{Results on long twins in random words and permutations}
\author{Elliott Liu, Linus Tang, Jessica Wan}
\date{}

\begin{document}

\begin{abstract}
We study long $r$-twins in random words and permutations. Motivated by questions posed in works of Dudek–Grytczuk–Ruci\'nski, we obtain the following. For a uniform word in $[k]^n$ we prove sharp one--sided tail bounds showing that the maximum $r$-power length (the longest contiguous block that can be partitioned into $r$ identical subblocks) is concentrated around $\frac{\log n}{(r-1)\log k}$. For random permutations, we prove that for fixed $k$ and $r\to\infty$, a uniform permutation of $[rk]$ a.a.s.\ contains $r$ disjoint increasing subsequences of length $k$, generalizing a previous result that proves this for $k=2$. Finally, we use a computer-aided pattern count to improve the best known lower bound on the length of alternating twins in a random permutation to $\alpha_n \ge \left(\tfrac{1}{3}+0.0989-o(1)\right)n$, strengthening the previous constant.
\end{abstract}

\maketitle

\section{Introduction}
Questions pertaining to twin objects in words, permutations, and graphs have a rich history in the literature. 

For finite words on finite alphabets $[k] = \{1,2,\dots,k\}$, \emph{twins} are two (not necessarily contiguous) disjoint subsequences which are the same when both read as words. Axenovich, Person, and Puzynina \cite{axenovich2013twins} bounded $t(n,k)$, the minimum over all words $w$ of length $n$ over $[k]$ of the maximum length twins in $w$, showing in particular that $t(n,k) = n/k - o(n)$, which is tight when $k = 2$. They argue by partitioning the word into subwords with sufficiently uniform letter densities and provide a construction using the natural interlacing.

Later work \cite{bukh2016twins} strengthened the lower bound to $t(n,k) \geq 1.02 \cdot \frac{n}{k} - o(n)$ and $t(n,k) \geq \Omega(nk^{-2/3})$ and improved upper bounds for $k \geq 4$ to $t(n,4) \leq 0.4932n$, $t(n,5) \leq 0.48n$, and 
\begin{align*}
    t(n,k) \leq \left( \frac{e}{\sqrt{k}} - \frac{e^2 + 1/2}{k} + O(k^{-3/2}) \right)n + o(n).
\end{align*}

We focus our attention on twins in \emph{random} structures. 

{Dudek, Grytczuk, and Ruci\'nski}~\cite{dudek2023twins} first studied the maximum length of twins in a random word on the alphabet $[k]$. They prove that for a uniformly random word $W_k(n)\in[k]^n$, asymptotically almost surely

\begin{align*}
    t(W_k(n)) \geq \frac{1.64n}{k+1},
\end{align*}
for all $k\geq 3$. They ask about the length of the more restricted notions of twins, specifically \emph{shuffle squares} where the union of twins must form a contiguous segment. 

{Dudek, Grytczuk, and Ruci\'nski}~\cite{dudek2023twins} consider restricted versions of the problem of twins in random words, such as the length of a maximum \emph{$r$-power} in a random word; that is, the length of the largest set of $r$ ``twins” for which the union of those words forms a contiguous segment.

We show that the maximum length of an $r$-power in a random word of length $n$ over the alphabet $[k]$ is well-concentrated around \[\frac{\log n}{(r-1) \log k}.\]

We next examine twin structures in random permutations. Two sequences $(a_1,\dots,a_k)$ and $(b_1,\dots,b_k)$ of distinct integers are similar if their entries have the same relative order, i.e. for all $1\le i<j\le k$ we have $a_i<a_j$ if and only if $b_i<b_j$. Twins in a permutation are disjoint similar subsequences of the permutation.

A subproblem in twins in permutations involves up-and-down \emph{alternating} subsequences as examined by Dudek, Grytczuk, and Ruci\'nski in \cite{dudek2021perms}, specifically those where $\pi_{i_1}, \pi_{i_2}, \dots, \pi_{i_k}$ satisfy \[\pi_{i_1} > \pi_{i_2} < \pi_{i_3} > \cdots \quad \text{or} \quad \pi_{i_1} < \pi_{i_2} > \pi_{i_3} < \cdots.\] (Since we only consider the limiting behavior as the length $n$ of the permutation tends to infinity, it does not matter whether we only consider the first condition here.)

We define twins to simply be any pair of disjoint alternating subpermutations of the same length both satisfying this property.

A lower bound on the maximum length of such twins can be achieved by calculating the expected number of extrema in $\pi$, i.e. the number of indices $i$ for which $\pi_i > \max(\pi_{i-1}, \pi_{i+1})$ or $\pi_i < \min(\pi_{i-1}, \pi_{i+1})$, and summing those in $\pi'$ formed by removing all extrema of $\pi$. We give a simple application of Azuma-Hoeffding which provides asymptotic concentration. We extend this result by improving the constant factor from $\left(\frac{1}{3} + \frac{1}{60}\right)n$ to $\left( \frac{1}{3} + 0.0989 \right)n$ with a computer-aided calculation, which is much closer to the empirically optimal value when not changing the procedure used to construct the twins.

Dudek, Grytczuk, and Ruci\'nski in \cite{dudek2021perms} study questions related to tight multiple twins. Tight multiple $r$-twins in a permutation are $r$ similar subpermutations whose disjoint union forms a contiguous part of the original permutation, such as $2,1,3$ and $5,4,8$ in the permutation
\[(12,6,7,\boldsymbol{2,5,4,1,3,8},13,10,9,11).\]
They ask the question of whether a random permutation of length $rk$ almost surely contains $r$-twins of length $k$ as $k$ stays fixed and $r\to\infty$ (note that any such twins fill the entire permutation and are thus tight). We show that this is indeed the case; in other words, tight multiple twins of a fixed length can almost surely fill a large random permutation.

\section{Results}

Dudek, Grytczuk, and Ruci\'nski in \cite{dudek2023twins} ask about the expected maximum length of a square in a random word of length $n$ over a $k$--letter alphabet. We answer this question for general $r$-powers, supplying explicit tail bounds which determine the expected value up to the constant term.

\begin{theorem}\label{thm:m_tail}
Let $n,k,r$ be positive integers with $k,r\ge2$ and let $M_{n,k,r}$ be the maximum length of an $r$-power in a random word of length $n$ over alphabet $[k]$. Then there exists a universal constant $C>0$ such that for all real numbers $t\ge0$,
\[\Pr\left[M_{n,k,r}\ge\frac{\log n}{(r-1)\log k}+t\right]\le\exp(-(\log k)(r-1)t)\]
and
\[\Pr\left[M_{n,k,r}\le\frac{\log n}{(r-1)\log k}-t\right]\le\exp(-Ck^{(r-1)(t-1)}).\]
\end{theorem}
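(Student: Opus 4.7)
I plan to prove the two tails separately. The upper tail admits a direct union bound: $M_{n,k,r}\ge L$ requires some starting index $i$ and period $L'\ge L$ for which $w_{i+s}=w_{i+s+L'}$ holds for all $0\le s<(r-1)L'$, and a fixed $(i,L')$ witnesses this with probability $k^{-(r-1)L'}$. Summing over $1\le i\le n-rL'+1$ and over $L'\ge L$ (the geometric series in $L'$) gives
\[\Pr[M_{n,k,r}\ge L]\le\frac{n\,k^{-(r-1)L}}{1-k^{-(r-1)}}.\]
Plugging in $L=\tfrac{\log n}{(r-1)\log k}+t$ yields $k^{-(r-1)t}/(1-k^{-(r-1)})$, which matches $\exp(-(\log k)(r-1)t)$ up to the factor $1/(1-k^{-(r-1)})\le 2$, absorbed in the stated constant.

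For the lower tail, set $L=\lceil\tfrac{\log n}{(r-1)\log k}-t\rceil$, so that the event $M_{n,k,r}\le\tfrac{\log n}{(r-1)\log k}-t$ implies no $r$-power of period exactly $L$ exists; let $p=k^{-(r-1)L}$, noting $np\in[k^{(r-1)(t-1)},k^{(r-1)t}]$. I plan to partition $[n]$ into $N$ disjoint sub-intervals $I_1,\ldots,I_N$ of length $\ell=\max(2rL,\lceil 1/p\rceil)$, and let $X_j$ count the starting positions inside $I_j$ at which an $r$-power of period exactly $L$ begins and fits inside $I_j$. The choice of $\ell$ guarantees $E[X_j]\ge 1$. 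If I can further show $\mathrm{Var}(X_j)=O(E[X_j])$, the Paley--Zygmund inequality yields $\Pr[X_j\ge 1]\ge c_0$ for a universal constant $c_0>0$; by mutual independence across the disjoint intervals,
\[\Pr[M_{n,k,r}<L]\le(1-c_0)^N\le\exp(-c_0 N),\]
and a quick check shows $N\ge\Omega(k^{(r-1)(t-1)})$ in both regimes of $\ell$, giving the claimed bound.

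The main obstacle is the variance estimate. This reduces to computing $\Pr[A_i\cap A_{i+d}]$ for starting positions at distance $1\le d<rL$, where $A_i$ denotes the event that an $r$-power of period $L$ begins at $i$. Analyzing the equivalence classes on positions $\{i,\ldots,i+rL+d-1\}$ generated by the two overlapping period-$L$ constraints yields
\[\Pr[A_i\cap A_{i+d}]=\begin{cases}p\cdot k^{-d}, & 1\le d\le(r-1)L,\\ p^2, & (r-1)L<d<rL.\end{cases}\]
The dichotomy hinges on whether the overlap window $[i+d,i+rL-1]$ has length at least $L$: if so (first case), it contains representatives of every residue class modulo $L$, forcing the two period-$L$ constraints to merge into a single period-$L$ pattern on the extended window and yielding geometric-in-$d$ covariance decay; if not (second case), some residue classes split and the two events factor, giving exact independence. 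Summing covariances over $d$ then gives $\mathrm{Var}(X_j)\le 3\,E[X_j]$, closing the Paley--Zygmund bound.
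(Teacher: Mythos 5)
Your upper tail is essentially the paper's first-moment bound; the one difference is that you sum over all block lengths $L'\ge L$ where the paper counts only powers of block length exactly $L$. Your version is arguably the more careful one (a power of block length $L'>L$ need not contain one of block length exactly $L$), but the resulting factor $1/(1-k^{-(r-1)})\le2$ does not literally fit the stated inequality, whose right-hand side carries no constant; treat this as a cosmetic mismatch to be noted rather than silently ``absorbed.''

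Your lower tail takes a genuinely different route. The paper writes $\Pr[\forall i\,\neg E_i]$ as a product of conditional probabilities over starting positions and decouples $E_i$ from the past by first conditioning on $B_{i-1}=0$ (with $B_j=\ind[W_j=W_{j+L}]$), after which the history is a function of $B_1,\dots,B_{i-1}$ alone; this avoids any second-moment computation. You instead restrict attention to $N$ disjoint windows of length $\ell=\max(2rL,\lceil 1/p\rceil)$, run Paley--Zygmund inside each, and multiply failure probabilities using independence of disjoint windows. Your covariance dichotomy is correct: for $1\le d\le(r-1)L$ the two constraint ranges union to a single interval of length $(r-1)L+d$, the residue classes mod $L$ merge into exactly $L$ chains on the $rL+d$ positions, and $\Pr[A_i\cap A_{i+d}]=k^{L-(rL+d)}=pk^{-d}$; for $(r-1)L<d<rL$ one gets $2L-(rL-d)$ classes on $rL+d$ positions, i.e.\ exactly $p^2$. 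Summing $pk^{-d}$ over $d$ gives $\mathrm{Var}(X_j)\le3\,\mathbb E[X_j]$, and $N=\Omega(k^{(r-1)(t-1)})$ holds in both regimes of $\ell$ (using $k^{(r-1)L}\ge rL$). Your approach buys robustness --- only independence across disjoint windows is needed, no conditioning trick --- at the cost of the overlap analysis; the paper's buys brevity. Two small repairs: take $L=\lfloor\frac{\log n}{(r-1)\log k}-t\rfloor+1$ rather than the ceiling (when the quantity is an integer, $M\le L$ does not exclude a power of period exactly $L$), and note that your choice of $\ell$ only guarantees $\mathbb E[X_j]\ge1/2$, not $\ge1$, which is harmless for Paley--Zygmund. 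Like the paper, you should also dispose of the degenerate range where $\frac{\log n}{(r-1)\log k}-t<1$ or $\ell>n$ separately by adjusting the universal constant $C$.
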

\begin{proof}
Let $m$ be a positive integer. Let $W$ be a random word of length $n$ over alphabet $[k]$. Consider the string $B$ of $n-m$ binary digits given by $B_i=\ind[W_i=W_{i+m}]$ for $i\in[n-m]$. Then each digit of $B$ equals $1$ with probability exactly $\frac1k$, independently from all other digits (note the pairs $(W_i,W_{i+m})$ are disjoint in $i$). Notice that for each $i\in[n-rm+1]$, an $r$-power of length $m$ begins at index $i$ of $W$ if and only if $B_i=\cdots=B_{i+(r-1)m-1}=1$; let $E_i$ denote this event, which occurs with probability $k^{-(r-1)m}$ for each $i$, though not independently. Here, the definitions of $B_i$ and $E_i$ depend on $m$, but we omit the $m$ in the notation for convenience.

We now prove the upper tail bound. Use $m=\ceil{\frac{\log n}{(r-1)\log k}+t}=\frac{\log n}{(r-1)\log k}+t'$ for some $t'\ge t$. Now

\begin{align*}
\Pr\left[M_{n,k,r}\ge\frac{\log n}{(r-1)\log k}+t\right]&=\Pr\left[M_{n,k,r}\ge m\right]\\
&\le\mathbb E\left[\sum_{i=1}^{n-rm+1}\ind[E_i]\right]\\
&=\sum_{i=1}^{n-rm+1}\Pr[E_i]\\
&\le nk^{-(r-1)m}\\
&=nk^{-(r-1)\left(\frac{\log n}{(r-1)\log k}+t'\right)}\\
&=k^{-(r-1)t'}\\
&\le k^{-(r-1)t}\\
&=\exp((-\log k)(r-1)t).
\end{align*}

We now prove the lower tail bound. This time we use $m=\floor{\frac{\log n}{(r-1)\log k}-t}+1=\frac{\log n}{(r-1)\log k}-t''$ for some $t''\ge t-1$. In the computation below, let $B_0=0$ for convenience.
\begin{align*}
\Pr&\left[M_{n,k,r}\le\frac{\log n}{(r-1)\log k}-t\right]\\
&=\Pr\left[M_{n,k,r}<m\right]\\
&=\Pr[(\forall i\le n-rm+1)\neg E_i]\\
&=\prod_{i=1}^{n-rm+1}\Pr[\neg E_i\mid(\forall j<i)\neg E_j]\\
&=\prod_{i=1}^{n-rm+1}(1-\Pr[B_{i-1}=0\mid(\forall j<i)\neg E_j]\Pr[E_i\mid (B_{i-1}=0)\land(\forall j<i)\neg E_j])\\
&=\prod_{i=1}^{n-rm+1}(1-\Pr[B_{i-1}=0\mid(\forall j<i)\neg E_j]\Pr[E_i])\\
&\le\prod_{i=1}^{n-rm+1}\left(1-\frac{k-1}k\cdot k^{-(r-1)m}\right)\\
&=\left(1-\frac{k-1}k\cdot k^{-(r-1)m}\right)^{(n-rm+1)}\\
&\le\exp\left(-\frac{k-1}k\cdot k^{-(r-1)m}(n-rm+1)\right)\\
&\le\exp\left(-\frac{k-1}k\cdot\frac{n-rm+1}n\cdot k^{(r-1)t''}\right)\\
&\le\exp\left(-\frac12\cdot\frac{n-\frac{r\log n}{(r-1)\log k}}n\cdot k^{(r-1)(t-1)}\right)\\
&\le\exp\left(-Ck^{(r-1)(t-1)}\right).
\end{align*}
\end{proof}
\begin{corollary}\label{cor:m_ev}
There exists a universal constant $C_1$ such that for all positive integers $n,k,r$ with $k,r\ge2$,
\[\left|\mathbb E[M_{n,k,r}]-\frac{\log n}{(r-1)\log k}\right|\le C_1.\]
\end{corollary}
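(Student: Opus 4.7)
The plan is to deduce the corollary by integrating the two--sided tail bounds supplied by \Cref{thm:m_tail}. Setting $L=\frac{\log n}{(r-1)\log k}$ and $Y=M_{n,k,r}-L$, I will apply the standard tail--to--expectation identity $\mathbb E[Y]=\int_0^\infty\Pr[Y>t]\,dt-\int_0^\infty\Pr[Y<-t]\,dt$ (valid since $Y$ is bounded) and take absolute values to reduce the claim to the inequality
\[\bigl|\mathbb E[M_{n,k,r}]-L\bigr|\le\int_0^\infty\Pr[Y\ge t]\,dt+\int_0^\infty\Pr[Y\le -t]\,dt,\]
so it suffices to bound each integral by a constant independent of $n,k,r$.

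For the upper--tail integral, substituting the bound from \Cref{thm:m_tail} gives $\int_0^\infty e^{-(\log k)(r-1)t}\,dt=\frac{1}{(r-1)\log k}\le\frac{1}{\log 2}$, using only $k,r\ge2$. This step is immediate.

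For the lower--tail integral, substituting the other bound leaves $\int_0^\infty\exp(-Ck^{(r-1)(t-1)})\,dt$. I will split at $t=1$: the piece on $[0,1]$ is at most $1$ trivially, while on $[1,\infty)$ I use $(r-1)(t-1)\ge t-1$ together with $k\ge2$ to dominate the integrand by $\exp(-C\cdot 2^{t-1})$, whose integral over $[1,\infty)$ is a universal constant (for instance, the substitution $s=2^{t-1}$ reduces it to a standard exponential integral up to the factor $\log 2$).

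Summing the two contributions yields the desired absolute constant $C_1$. I do not foresee a genuine obstacle: the whole argument is routine tail integration. The only care needed is to verify that the resulting constants depend only on the universal $C$ from \Cref{thm:m_tail} and on the bounds $k,r\ge2$, and therefore are uniform in $n,k,r$ as required.
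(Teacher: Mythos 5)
Your proposal is correct and follows essentially the same route as the paper: both deduce the corollary by integrating the two tail bounds of \Cref{thm:m_tail} and checking that each integral is bounded by a constant depending only on $k,r\ge2$ and the universal $C$. The only (cosmetic) difference is in handling the lower-tail integral, where you split at $t=1$ and dominate by $\exp(-C\cdot2^{t-1})$ while the paper uses a shift/substitution to compare with $\int_0^\infty e^{-k^{(r-1)t}}\,dt$; both yield a universal constant.
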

\begin{proof}
Applying \Cref{thm:m_tail}, we have
\begin{align*}
\mathbb E[M_{n,k,r}]-\frac{\log n}{(r-1)\log k}&\le\int_0^\infty\Pr\left[M_{n,k,r}-\frac{\log n}{(r-1)\log k}\ge t\right]dt\\
&\le\int_0^\infty e^{-(\log k)(r-1)t}dt\\
&=\frac1{(\log k)(r-1)}
\end{align*}
and
\begin{align*}
-\left(\mathbb E[M_{n,k,r}]-\frac{\log n}{(r-1)\log k}\right)&\le\int_0^\infty\Pr\left[M_{n,k,r}-\frac{\log n}{(r-1)\log k}\le-t\right]dt\\
&\le\int_0^\infty e^{-Ck^{(r-1)(t-1)}}dt\\
&\le1-\frac{\log C}{(\log k)(r-1)}\int_0^\infty e^{-k^{(r-1)t}}dt\\
&\le1-\frac{\log C}{(\log k)(r-1)}\int_0^\infty e^{-t}dt\\
&=1-\frac{\log C}{(\log k)(r-1)},
\end{align*}
so when we set $C_1=\max(\frac1{\log 2},1-\frac{\log C}{\log 2})$, it holds that
\[\left|\mathbb E[M_{n,k,r}]-\frac{\log n}{(r-1)\log k}\right|\le C_1.\]
\end{proof}

Instead of asking about the maximum length $M$ of an $r$-power in a random word, we can also ask the converse: What is the maximum power $R$ of a given length $m$ that exists in a random word in $[k]^n$? The tail bounds proven in Theorem~\ref{thm:m_tail} also produce tail bounds on $R$.

\begin{corollary}\label{cor:r_tail}
Let $n,k,m$ be positive integers with $k\ge2$ and let $R_{n,k,m}$ be the maximum power of length $m$ in a random word in $[k]^n$. Then there exists a universal constant $C>0$ such that for all real numbers $u\ge0$,
\[\Pr\left[R_{n,k,m}\ge\frac{\log n}{m\log k}+1+u\right]\le\exp(-(\log k)mu)\]
and
\[\Pr\left[R_{n,k,m}\le\frac{\log n}{m\log k}+1-u\right]\le\exp(-Ck^{(m+1)u}n^{-1/m})\]
\end{corollary}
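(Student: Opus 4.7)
The plan is to exploit the equivalence $R_{n,k,m}\ge r \iff M_{n,k,r}\ge m$, which converts each tail event for $R_{n,k,m}$ into the corresponding event for $M_{n,k,r}$ at a suitably chosen integer $r$, so that \Cref{thm:m_tail} applies directly.

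For the upper tail I would set $r:=\lceil \frac{\log n}{m\log k}+1+u\rceil$, making $\{R_{n,k,m}\ge\frac{\log n}{m\log k}+1+u\}$ coincide with $\{M_{n,k,r}\ge m\}$. Invoking the upper tail of \Cref{thm:m_tail} at $t:=m-\frac{\log n}{(r-1)\log k}$, which is nonnegative because $r-1\ge\frac{\log n}{m\log k}$, the exponent $(\log k)(r-1)t$ telescopes to $m(r-1)\log k-\log n$. The bound $r-1\ge\frac{\log n}{m\log k}+u$ then yields $(\log k)(r-1)t\ge(\log k)mu$, giving the stated upper tail bound.

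For the lower tail I would set $r:=\lfloor\frac{\log n}{m\log k}+1-u\rfloor+1$, so that $\{R_{n,k,m}\le\frac{\log n}{m\log k}+1-u\}$ coincides with $\{M_{n,k,r}<m\}$. Applying the lower tail of \Cref{thm:m_tail} at $t:=\frac{\log n}{(r-1)\log k}-m+1$, the exponent $(r-1)(t-1)$ telescopes to $\log_k n-m(r-1)$, and substituting the upper bound $r-1\le\frac{\log n}{m\log k}+1-u$ gives $(r-1)(t-1)\ge mu-m$. After reorganising the exponent and absorbing a bounded multiplicative factor into the universal constant $C$, one reaches the claimed form $k^{(m+1)u}n^{-1/m}$.

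The main bookkeeping issue is the integer rounding in the definition of $r$ and the verification that $t\ge 0$ at each step (the latter needing $u$ in a range where the corollary is non-trivial, else the bound holds trivially). I expect the principal technical subtlety to be aligning the final exponent with the exact form in the statement: this is a rewriting between the natural parameters $(r,m,t)$ of \Cref{thm:m_tail} and the corollary's parameters $(m,u)$, achieved by the specific integer choice of $r$ and a suitable rescaling of $C$.
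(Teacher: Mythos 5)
Your overall strategy --- converting each tail event for $R_{n,k,m}$ into one for $M_{n,k,r}$ at a suitably chosen integer $r$ and invoking \Cref{thm:m_tail} --- is exactly the paper's, and your upper-tail derivation is correct and matches the paper's: the exponent $(\log k)(r-1)t=m(r-1)\log k-\log n$ is at least $(\log k)mu$ once $r-1\ge\frac{\log n}{m\log k}+u$.

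The lower tail, however, has a genuine gap in the final step. With your choices $r=\lfloor\frac{\log n}{m\log k}+1-u\rfloor+1$ and $t=\frac{\log n}{(r-1)\log k}-m+1$, the exponent $(r-1)(t-1)=\frac{\log n}{\log k}-m(r-1)$ is bounded below by $m(u-1)$, so you obtain $\exp(-Ck^{m(u-1)})$. This is \emph{not} within a bounded factor of the claimed $\exp\bigl(-Ck^{(m+1)u}n^{-1/m}\bigr)=\exp\bigl(-Ck^{(m+1)u-\frac{\log n}{m\log k}}\bigr)$: the two exponents of $k$ differ by $u+\frac{\log n}{m\log k}-m$, which is unbounded in $n$ and $u$, so no rescaling of the universal constant $C$ can bridge them; in particular your bound is strictly weaker than the claimed one whenever $u>\frac{\log n}{m\log k}-m$, a range in which the event is not trivially impossible. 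The paper reaches the stated form differently: it applies the lower tail of \Cref{thm:m_tail} to the event $\{M_{n,k,r}\le m\}$ (rather than $\{M_{n,k,r}\le m-1\}$) with $r=\frac{\log n}{m\log k}+1-u$ taken to be an integer (restricting to such $u$ loses nothing, since rounding $u$ up shrinks neither the event nor the validity of the bound). Then $t=\frac{\log n}{(r-1)\log k}-m$, and the ``$-1$'' inside the theorem's exponent $(r-1)(t-1)$ supplies the extra factor $k^{-(r-1)}$ needed to turn $\frac{\log n}{\log k}-m(r-1)$ into $\frac{\log n}{\log k}-(m+1)(r-1)=(m+1)u-\frac{\log n}{m\log k}$ exactly. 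As written, your ``reorganising the exponent and absorbing a bounded multiplicative factor'' step does not go through, and the lower-tail bookkeeping needs to be redone along the paper's lines.
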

\begin{proof}
For any $n,k,m,r$, we have $\Pr[R_{n,k,\ceil m}\ge r]=\Pr[M_{n,k,\ceil r}\ge m]$. Plugging in $r=\frac{\log n}{m\log k}+1+u$ gives us
\begin{align*}
\Pr&\left[R_{n,k,m}\ge\frac{\log n}{m\log k}+1+u\right]\\
&=\Pr\left[M_{n,k,\ceil{\frac{\log n}{m\log k}+1+u}}\ge m\right]\\
&\le\exp\left(-(\log k)\left(\ceil{\frac{\log n}{m\log k}+u}\right)\left(m-\frac{\log n}{(\frac{\log n}{m\log k}+u)\log k}\right)\right)\\
&\le\exp\left(-(\log k)\left(\frac{\log n}{m\log k}+u\right)\left(\frac{m^2u\log k}{\log n+mu\log k}\right)\right)\\
&=\exp\left(-(\log k)mu\right).
\end{align*}

Similarly, for any $n,k,m,r$, we have $\Pr[R_{n,k,\floor m}\le r]=\Pr[M_{n,k,\floor r}\le m]$. We assume that $u$ is chosen so that $r=\frac{\log n}{m\log k}+1-u$ is an integer, because this is when the bound is the tightest. So we have
\begin{align*}
\Pr\left[R_{n,k,m}\le\frac{\log n}{m\log k}+1-u\right]&\le\Pr[M_{n,k,r}\le m]\\
&\le\exp(-Ck^{(r-1)(\frac{\log n}{(r-1)\log k}-m-1)})\\
&\le\exp(-Ck^{\frac{\log n}{\log k}-(m+1)(r-1)})\\
&\le\exp(-Ck^{\frac{\log n}{\log k}-(m+1)(\frac{\log n}{m\log k}-u)})\\
&\le\exp(-Ck^{(m+1)u}n^{-1/m}).
\end{align*}
\end{proof}

\begin{corollary}\label{cor:r_ev}
There exists a universal constant $C$ such that for all positive integers $n,k,m$ with $k\ge2$,
\[\mathbb E[R_{n,k,m}]-\left(\frac{\log n}{m\log k}+1\right)\le C.\]
\end{corollary}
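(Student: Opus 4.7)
The plan is to mimic the upper-tail half of the proof of \Cref{cor:m_ev}, using the upper tail bound for $R_{n,k,m}$ from \Cref{cor:r_tail} in place of the one for $M_{n,k,r}$. Only one side of the tail is needed because only a one-sided bound is being claimed.

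First I would invoke the elementary identity $\mathbb E[X]-a\le\int_0^\infty\Pr[X-a\ge u]\,du$, valid for any real-valued random variable $X$ and any real $a$; this follows from $X-a\le(X-a)^+$ together with the layer-cake representation of $\mathbb E[(X-a)^+]$. Taking $X=R_{n,k,m}$ and $a=\frac{\log n}{m\log k}+1$ reduces the claim to bounding the integral $\int_0^\infty\Pr\bigl[R_{n,k,m}\ge\frac{\log n}{m\log k}+1+u\bigr]\,du$.

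Next I would substitute the upper tail estimate from \Cref{cor:r_tail}, namely $\Pr\bigl[R_{n,k,m}\ge\frac{\log n}{m\log k}+1+u\bigr]\le\exp(-(\log k)mu)$, and compute
\[\int_0^\infty e^{-(\log k)mu}\,du=\frac{1}{m\log k}\le\frac{1}{\log 2},\]
where the last inequality uses $m\ge1$ and $k\ge2$. Setting $C=\frac{1}{\log 2}$ then yields the desired universal bound.

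There is no substantive obstacle; the argument is a direct analogue of the upper half of the proof of \Cref{cor:m_ev}. It is perhaps worth noting why the corollary is one-sided: the lower tail in \Cref{cor:r_tail} carries the factor $n^{-1/m}$, which need not be small when $m$ is comparable to or larger than $\log n$, so integrating that bound does not obviously produce an $O(1)$ estimate uniform in $n,k,m$. Thus a matching lower inequality, if desired, would require a separate argument and the present approach only delivers the stated upper inequality.
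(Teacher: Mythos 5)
Your proposal is correct and follows essentially the same route as the paper: apply the layer-cake bound $\mathbb E[X]-a\le\int_0^\infty\Pr[X-a\ge u]\,du$, plug in the upper tail from \Cref{cor:r_tail}, and integrate to get $\frac{1}{m\log k}\le\frac{1}{\log 2}$. Your closing remark about why the lower tail does not yield a matching bound also matches the paper's own observation in its concluding remarks.
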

\begin{proof}
We have
\begin{align*}
\mathbb E[R_{n,k,m}]-\left(\frac{\log n}{m\log k}+1\right)&\le\int_0^\infty\Pr\left[R_{n,k,m}-\left(\frac{\log n}{m\log k}+1\right)\ge u\right]du\\
&\le\int_0^\infty e^{-(\log k)mu}du\\
&=\frac1{m\log k}.
\end{align*}
\end{proof}

\textit{Remark.} We conjecture that $\mathbb E[R_{n,k,m}]-\left(\frac{\log n}{m\log k}+1\right)$ is also lower-bounded by a universal constant, but unfortunately the tail bound in \ref{cor:r_tail} is not strong enough to prove this due to the factor of $n^{-1/m}$ in the exponent.

\cite{dudek2021perms} defines $Q^{(r)}(k)$ as the number of permutations of $[rk]$ that are tight $r$-twins of length $k$. They prove that $\lim_{r\to\infty}\frac{Q^{(r)}(2)}{(2r)!}=1$. In other words, when $k=2$, a random permutation $[2r]$ is a.a.s.\ a tight $r$-twin of length $2$ as $r$ goes to $\infty$. We prove a generalization, replacing $2$ with an arbitrary positive integer $k$. Specifically, we show that, $\lim_{r\to\infty}\frac{Q^{(r)}(k)}{(kr)!}=1$ holds for any positive integer $k$, by considering increasing twins. This is restated in words and proven below.

\begin{theorem}\label{thm:perm_partition}
Let $k$ be a positive integer and $n=rk$ with $r\to\infty$. Then a random permutation $\Pi_n$ of $[n]$ a.a.s.\ contains $r$-twins of length $k$, each twin in increasing order.
\end{theorem}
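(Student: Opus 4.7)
Plan. The approach is by induction on $k$: the case $k = 1$ is trivial, and the case $k = 2$ is the result of Dudek--Grytczuk--Ruciński. For the inductive step at $k \ge 3$, let $\Pi_n$ be a uniform random permutation of $[n]$ with $n = rk$, and let $P$ denote the set of $r$ positions holding the largest values $n - r + 1, \ldots, n$. By symmetry, $P$ is a uniform random $r$-subset of $[n]$ and, independently of $P$, the restriction of $\Pi_n$ to positions outside $P$ becomes, after order-preserving relabeling of those positions, a uniform random permutation $\Pi'$ of $[r(k-1)]$. The inductive hypothesis applied to $\Pi'$ yields, a.a.s., a decomposition of $\Pi'$ into $r$ disjoint increasing chains $C_1, \ldots, C_r$ each of length $k-1$.

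To lift this to a decomposition of $\Pi_n$ into $r$ chains of length $k$, we append one element of $P$ to each $C_i$. The value constraint is automatic (every removed value exceeds every retained value), so the task is to find a perfect matching between the $C_i$ and $P$ in which $C_i$ is matched with some $q \in P$ satisfying $q > M_i$, where $M_i$ is the largest original-indexed position in $C_i$. By Hall's theorem applied to two sorted sequences, such a matching exists iff $q_{(j)} > M_{(j)}$ for every $j \in [r]$, where $q_{(j)}$ and $M_{(j)}$ are the sorted removed and chain-maximum positions. Equivalently, writing $M_{(j)}^\star$ for the sorted chain maxima in the relabeled coordinates of $\Pi'$, the condition becomes $q_{(j)} - j \ge M_{(j)}^\star$ for all $j$.

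The main obstacle is verifying this Hall condition a.a.s. One side is easy: since $P$ is uniform among $r$-subsets of $[rk]$, hypergeometric concentration yields that $q_{(j)} - j$ concentrates around its mean $j(k-1)$ with fluctuations of order $\sqrt{n}$, uniformly in $j$. The chain-maximum side is more delicate, since the decomposition produced by the induction is not canonical and its quantitative behavior is a priori unconstrained. I would handle this by strengthening the inductive hypothesis to assert that $\Pi'$ a.a.s.\ admits a decomposition whose sorted chain maxima satisfy $M_{(j)}^\star \le j(k-1) + o(\sqrt{n/\log n})$ uniformly in $j$. This refined statement should follow by building the decomposition greedily from left to right: at each step, pick a chain of length $k-1$ that terminates at the smallest possible position. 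The typical concentrated RSK structure of a random permutation and the abundance of length-$(k-1)$ increasing chains starting near any given position should suffice to show that the greedy procedure meets the quantitative bound throughout. Propagating this refined statement through the induction, and in particular controlling the boundary layer where the greedy procedure is tightest, is the crux of the argument.
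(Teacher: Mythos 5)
Your reduction runs into a counting obstruction at the matching step that no amount of control over the chain maxima can fix. Since each chain of $\Pi'$ has $k-1$ elements, the $j$ chains with the smallest maxima occupy $j(k-1)$ distinct positions of $[r(k-1)]$, all of them at most $M^\star_{(j)}$; hence $M^\star_{(j)}\ge j(k-1)$ holds \emph{deterministically}, for every possible decomposition. Your Hall condition $q_{(j)}-j\ge M^\star_{(j)}$ therefore forces $q_{(j)}\ge jk$ for all $j\in[r]$, equivalently that every prefix $[1,p]$ contains at most $p/k$ of the $r$ removed positions. But $P$ is a uniform $r$-subset of $[rk]$, so $|P\cap[1,p]|$ is hypergeometric with mean exactly $p/k$ and symmetric fluctuations of order $\sqrt{n}$: already at $p=n/2$ the condition fails with probability $\tfrac12-o(1)$, and the probability that the whole bridge stays at or below its mean is $\Theta(1/r)\to 0$ by the cycle lemma. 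So the matching you need a.a.s.\ does \emph{not} exist, for any decomposition of $\Pi'$ the induction might supply. Your proposed refinement $M^\star_{(j)}\le j(k-1)+o(\sqrt{n/\log n})$ points in the wrong direction: the failure occurs because $q_{(j)}-j$ drops \emph{below} $j(k-1)$ by $\Theta(\sqrt n)$ at some scale, while $M^\star_{(j)}$ can never drop below $j(k-1)$. Put differently, your scheme only searches for decompositions of $\Pi_n$ in which the $r$ largest values each terminate a distinct chain, and such decompositions a.a.s.\ do not exist.

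For comparison, the paper's proof is not inductive and sidesteps this boundary problem geometrically: it realizes $\Pi_n$ as $n$ uniform points in an $n\times n$ square, cuts the square into a $\Theta(n^{4/13})\times\Theta(n^{4/13})$ grid, handles the restrictive bottom-right and top-left corner regions first by matching each of their points with $k-1$ points drawn from long, thin ``tall cells'' along the sides of the square (arranged diagonally so the resulting $k$-sets are ascending), and then partitions each remaining diagonal strip of grid cells into ascending $k$-sets by a greedy rule that always draws from the $k$ fullest cells, preserving a balancedness invariant. If you wish to rescue an inductive argument, you would have to let the element appended to each chain be chosen adaptively from values that are not globally maximal, which forfeits the automatic value constraint that motivated removing the top $r$ values in the first place.
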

\begin{proof}
Assume $k\ge2$ since the $k=1$ case is trivial.

Let $n=rk$ and consider a set $T$ of $n$ points chosen independently and uniformly at random from the interior of an $n\times n$ square $\mathcal S=\{(x,y)\in\mathbb R^2:0\le x,y\le n\}$. These points can be ordered in increasing order of $x$-coordinate, and they can be ordered in increasing order of $y$-coordinate. In this manner, the $n$ points determine a uniformly random permutation $\Pi_n$ of $[n]$, namely that which transforms the former order into the latter.

Then $\Pi_n$ contains $r$-twins of length $k$ with each twin in increasing order if and only if $T$ can be partitioned into $r$ sets of $k$ such that within each group the ``increasing $x$-coordinate” order and ``increasing $y$-coordinate” order are identical (we say that a set of $k$ points is \textit{ascending} if it has this property). It now suffices to show that $T$ a.a.s.\ can be partitioned into $r$ ascending sets.

A helpful intuition is that points near the bottom right and top left of $\mathcal S$ are the most restrictive in terms of the partitioning, so we deal with them first.

We divide $\mathcal S$ into a $t\times t$ grid of congruent square cells of side length $\frac nt$, where $t=\Theta(n^{4/13})$. We will further divide $\mathcal S$ into several regions and illustrate an example in the diagram below.

Consider the region consisting of $\frac{k(k+1)}2$ cells in a triangular formation in the bottom right of the grid. Divide it into two congruent parts, $\mathcal A$ and $\mathcal B$, along the line $x+y=n$. Similarly, construct their reflections $\mathcal C$ and $\mathcal D$ over the line $x=y$, which are located at the top right of the grid.

Now, construct rectangles $\mathcal A'$, $\mathcal B'$, $\mathcal C'$, $\mathcal D'$ along the right, bottom, top, and left of the square $\mathcal S$, respectively. The width of each rectangle is $n^{6/13}$.

Divide $\mathcal A'$ into a $k-1$ by $k-1$ grid of ``tall cells” using evenly spaced vertical lines and ``almost evenly spaced” horizontal lines. For the purposes of the argument that follows, it suffices to have the height of each ``tall cell” be $\Theta(n)$, and we assume for simplicity that each gridline of $\mathcal A'$ lies along a gridline of the initial $t\times t$ division of $\mathcal S$.

Divide $\mathcal B'$, $\mathcal C'$, and $\mathcal D'$ similarly.

\begin{figure}[h]
\begin{center}
\includegraphics[width=.8\linewidth]{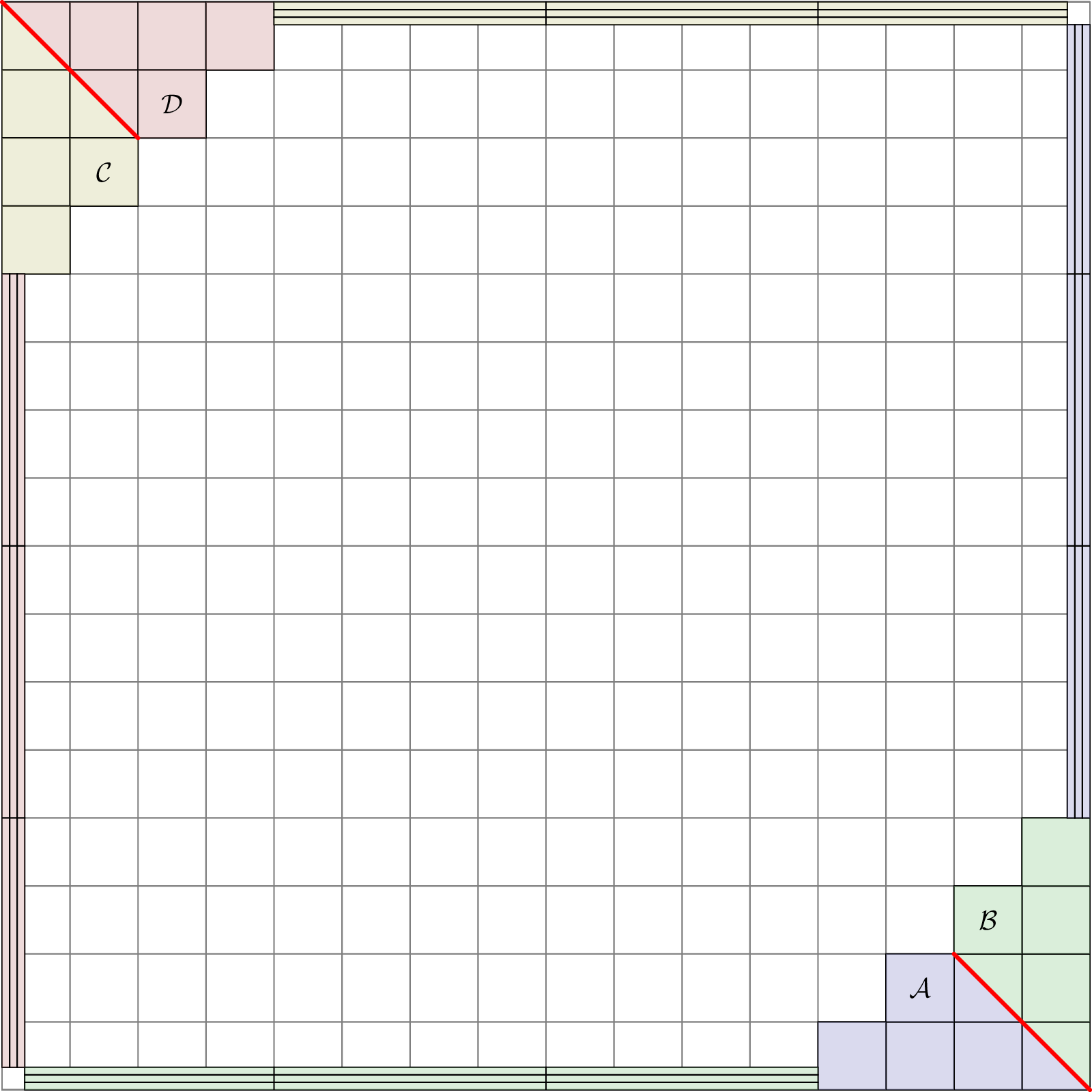}
\caption{An example with $t=16$, $k=4$}
\end{center}
\end{figure}

For a region $\mathcal R$ in the plane, we will use $|\mathcal R|$ as a shorthand for the number of points in $\mathcal R\cap T$.

To find a valid partition of the $n$ points, we will start by matching each point in $\mathcal A\cap T$ with $k-1$ points in $\mathcal A'\cap\ T$, and similarly for $\mathcal B$, $\mathcal C$, and $\mathcal D$. We will use the $k-1$ by $k-1$ grids to ensure that each set of $k$ points is ascending. In particular, for $i\in[k-1]$, let $\mathcal A'_i$ (respectively $\mathcal B'_i,\mathcal C'_i,\mathcal D'_i$) denote the tall cell which is $i$th from the bottom and $i$th from the left of the $k-1$ by $k-1$ grid $\mathcal A'$ (respectively, $\mathcal B',\mathcal C',\mathcal D'$). Let $\mathcal A'_{\mathrm{diag}}=\mathcal A'_1\cup\cdots\cup\mathcal A'_{k-1}$ and define $\mathcal B'_{\mathrm{diag}}$, $\mathcal C'_{\mathrm{diag}}$, and $\mathcal D'_{\mathrm{diag}}$ similarly. For each point in $\mathcal A\cap T$, we will want to match it with one point each from $\mathcal A'_1\cap T,\dots,\mathcal A'_{k-1}\cap T$ to form a set of $k$ (and similarly for $\mathcal B$, $\mathcal C$, and $\mathcal D$).

Naturally, this requires that $|\mathcal A|\le|\mathcal A'_i|$ for each $i\in[k-1]$. Below we show that this requirement a.a.s.\ holds, along with other requirements that will be used later in the proof. As a side note, although $\mathcal A$ has a much smaller area than each $\mathcal A'_i$ in the example diagram, it actually has a larger area as $n\to\infty$, which heuristically suggests that $|\mathcal A|\le|\mathcal A'_i|$.

For $i,j\in[t]$, let $\mathcal S_{i,j}$ denote the cell of grid $\mathcal S$ which is $i$th from the left and $j$th from the bottom.

\begin{claim}
All of the following hold a.a.s.:
\begin{enumerate}[label=(\arabic*)]
\item For all $i,j\in[t]$, we have $|\mathcal S_{i,j}|=\frac n{t^2}(1\pm o(1))=\Theta(n^{5/13})$.
\item $|\mathcal A|=\Theta(n^{5/13})$. Similarly for $\mathcal B$, $\mathcal C$, and $\mathcal D$.
\item For all $i\in[k-1]$, $|\mathcal A'_i|=\Theta(n^{6/13})$. Similarly for $\mathcal B$, $\mathcal C$, and $\mathcal D$.
\item For all $i,j\in[t]$ such that $\mathcal S_{i,j}$ overlaps with $\mathcal A'$, we have $|\mathcal S_{i,j}\cap\mathcal A'_{\mathrm{diag}}|=\Theta(n^{2/13})$. Similarly for $\mathcal B$, $\mathcal C$, and $\mathcal D$.
\item $|[0,n^{6/13}]\times[0,n^{6/13}]|=|[n-n^{6/13},n]\times[n-n^{6/13},n]|=0$. In other words, no point in $\mathcal S$ lies in the bottom left nor top right $n^{6/13}\times n^{6/13}$ corner of $\mathcal S$.
\end{enumerate}
\end{claim}
\begin{proof}
Each of the $n$ points is chosen uniformly randomly from square $\mathcal S$ of area $n^2$. So, for a given rectangle $\mathcal R\subseteq\mathcal S$, the probability that the point lies in $\mathcal R$ is $\frac{\mathrm{Area}(\mathcal R)}{n^2}$. In particular, $\mathbb E[|\mathcal R|]=\mathrm{Area}(\mathcal R)/n$. Furthermore, if $\mathbb E[|\mathcal R|]=\omega(1)$, then $\Pr\left[\left|\frac{|\mathcal R|}{\mathbb E[|\mathcal R|]}-1\right|\ge t\right]\le\exp\!\big(-t^2\,\mathbb E[|\mathcal R|]/3\big)$ for all $0\le t\le1$ by the Chernoff bound. Now it follows that (1), (2), (3), and (4) a.a.s.\ hold, because the relevant regions have areas
\begin{enumerate}[label=(\arabic*)]
\item $\frac nt\cdot\frac nt=\Theta(n^{18/13})$,
\item $\frac{k(k+1)}4\cdot\frac nt\cdot\frac nt=\Theta(n^{18/13})$,
\item $\Theta(n)\cdot\Theta(n^{6/13})=\Theta(n^{19/13})$, and
\item $\Theta(n^{9/13})\cdot\Theta(n^{6/13})=\Theta(n^{15/13})$.
\end{enumerate}
Finally, (5) a.a.s.\ holds by the first moment method because the relevant regions have area $n^{12/13}$ each.
\end{proof}

We now assume that (1) through (5) hold, and that $n$ is sufficiently large.

For each integer $d$ with $|d|\le t-k-1$, let $\mathcal S_d$ denote the union of cells along a diagonal of the grid $\mathcal S$:
\[\mathcal S_d=\bigcup_{\substack{i,j\in[t]\\i-j=d}} \mathcal S_{i,j}.\]

In particular, $\mathcal S=\mathcal A\sqcup\mathcal B\sqcup\mathcal C\sqcup\mathcal D\sqcup(\mathcal S_{-(t-k-1)}\sqcup\cdots\sqcup\mathcal S_{t-k-1})$.

As mentioned earlier, we match each point in $\mathcal A\cap T$ with one point each in $\mathcal A'_1\cap T,\dots,\mathcal A'_{k-1}\cap T$, which is possible because $|\mathcal A|=\Theta(n^{5/13})<\Theta(n^{6/13})=|\mathcal A_i'|$ by (2) and (3). Furthermore, each set is ascending, with the point chosen from $\mathcal A\cap T$ being the bottom-leftmost point, because no point in $\mathcal A\cap T$ has an $x$-coordinate greater than $n-n^{6/13}$ by (5).

We can similarly match each point in $\mathcal B\cap T$, $\mathcal C\cap T$, and $\mathcal D\cap T$ with $k-1$ points in $\mathcal B'$, $\mathcal C'$, and $\mathcal D'$, forming ascending sets of $k$ points.

Each $\mathcal A'_i$ intersects $\Theta(n^{4/13})$ cells $\mathcal S_{i,j}$. Since each such intersection has $\Theta(n^{2/13})$ points in $T$ by (4) and since $|\mathcal A|=\Theta(n^{5/13})$, it follows that we can select $\Theta(n^{1/13})$ points from each such intersection in the process of matching the points from $\mathcal A$. A similar argument applies to $\mathcal B$, $\mathcal C$, and $\mathcal D$. After matching some of the points in $\mathcal A'_{\mathrm{diag}}\cap T$, $\mathcal B'_{\mathrm{diag}}\cap T$, $\mathcal C'_{\mathrm{diag}}\cap T$, and $\mathcal D'_{\mathrm{diag}}\cap T$ with all points from $\mathcal A\cap T$, $\mathcal B\cap T$, $\mathcal C\cap T$, and $\mathcal D\cap T$, let $U\subseteq T$ be the set of points that remain unmatched. We can do so in such a way that the number of points in $\mathcal S_d\cap U$ is a multiple of $k$ for each $-(n-k-1)\le d\le n-k-1$, since each such diagonal intersects either $\mathcal A'$ or $\mathcal C'$.

Finally, it suffices to show that we can partition each $\mathcal S_d\cap U$ into ascending sets of $k$ points. For any $-(n-k-1)\le d\le n-k-1$, we have that $\mathcal S_d$ consists of $\ell>k$ cells, and that each cell intersects $T$ at $\frac n{t^2}(1\pm o(1))$ points by (1).

We now consider these $\ell$ cells in isolation. Since $O(n^{1/13})=o(\frac n{t^2})$ points from each cell have already been matched, we know that each cell also intersects $U$ at $\frac n{t^2}(1\pm o(1))$ points. We call these points unmatched points.

We can match the remaining points by repeatedly applying the following operation: Take $k$ cells with the most unmatched points, ties broken arbitrarily. Construct the ascending set containing one unmatched point from each cell. These points are no longer considered unmatched.

Notice that at the beginning of this process, no cell contains more than $\frac 1k$ of the unmatched points (because each of the $\ell>k$ cells has $\frac n{t^2}(1\pm o(1))$ unmatched points). We can verify that this property is preserved by the repeated operation, by casework on whether the cell in question was among those from which a point was matched. Therefore, the process terminates with all cells containing no more unmatched points, so we have demonstrated how to partition all of the remaining points from the diagonal sets $\mathcal S_d$ into ascending sets.

We can conclude that $T$ a.a.s.\ can be partitioned into ascending sets, so $\Pi_n$ a.a.s.\ contains $r$-twins of length $k$.
\end{proof}
\begin{corollary}
Let $k$ be a positive integer. Then the maximum multiplicity of twins of length $k$ in a random permutation $\Pi_n$ of $[n]$ is a.a.s.\ equal to $\floor{\frac nk}$.
\end{corollary}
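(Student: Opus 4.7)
The plan is to establish matching upper and lower bounds. The upper bound $\floor{n/k}$ is immediate: any collection of disjoint twins of length $k$ uses at least $mk$ of the $n$ positions, so there can be at most $\floor{n/k}$ of them, deterministically.

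For the lower bound, I would reduce directly to \Cref{thm:perm_partition}. Set $r=\floor{n/k}$ and let $n'=rk\le n$, and consider the pattern of $\Pi_n$ restricted to the first $n'$ positions. This restricted pattern is distributed as a uniformly random permutation $\Pi'_{n'}$ of $[n']$, since restricting a uniform permutation of $[n]$ to any fixed set of positions of size $n'$ induces a uniform pattern on $[n']$ (equivalently: all $n'!$ relative orderings are equally likely). Since $r\to\infty$ as $n\to\infty$ (for fixed $k$), \Cref{thm:perm_partition} applies to $\Pi'_{n'}$ and guarantees that a.a.s.\ it contains $r$-twins of length $k$ with each twin in increasing order.

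Finally, I would lift these twins back to $\Pi_n$: any collection of disjoint increasing subsequences of length $k$ in the restricted pattern corresponds, via the order-preserving embedding of the first $n'$ positions into $[n]$, to a collection of disjoint increasing subsequences of length $k$ in $\Pi_n$ itself. These are in particular $k$-twins in $\Pi_n$ of multiplicity $r=\floor{n/k}$, matching the upper bound.

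There is no real obstacle here; the only minor care needed is in noting that the ``restriction to the first $n'$ positions'' of a uniform permutation is again uniform as a pattern, so that \Cref{thm:perm_partition} can be invoked as a black box. Everything else is bookkeeping.
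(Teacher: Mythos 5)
Your proposal is correct and follows essentially the same route as the paper: the paper's one-line proof likewise restricts to the first $k\floor{n/k}$ positions and invokes \Cref{thm:perm_partition}, leaving the trivial counting upper bound and the uniformity of the restricted pattern implicit. Your write-up just makes those routine details explicit.
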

\begin{proof}
By \Cref{thm:perm_partition}, the probability that the first $k\floor{\frac nk}$ positions of $\Pi_n$ are the union of $\floor{\frac nk}$-twins of length $k$ approaches $1$ as $n\to\infty$.
\end{proof}

Dudek, Grytczuk, and Ruci\'nski \cite{dudek2021perms} studied the maximum length $\alpha(n)$ of a pair of disjoint alternating subpermutations in $\Pi_n$. It is known that in a single permutation in $\Pi_n$, the longest alternating subpermutation has length the number of \emph{extremal} points, the local maxima and minima of the permutation, along with the endpoints, and Stanley \cite{stanley2008alternating} showed that this quantity is a.a.s.\ close to $2n/3$. 

For the lower bound, Dudek et al. produce a pair of disjoint alternating subpermutations as follows: split the $\pi$ in the middle, and assign the extremal points in the first half to a set $A$ and the second half to $B$. Then, consider the subpermutations of the two halves remaining after removing $A$ and $B$, respectively; the extremal points on the left may be added to $B$ and the extremal points on the right may be added to $A$.

Manually considering particular subpermutations, Dudek et al. prove that this procedure obtains 
\[\alpha_n \geq \left(1/3 + 0.0194\ldots - o(1)\right)n.\]
However, their computer simulations suggest that improvements up to $\alpha_n \geq (1/3 + 0.1006\ldots)n$ are possible without changing the procedure for producing the permutations.

Indeed, through some computer assistance we may show that
\begin{theorem}
    $\alpha_n \geq \left(1/3 + 0.0989\ldots + o(1)\right) n$
\end{theorem}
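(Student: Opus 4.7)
The plan is to follow the two-pass construction of Dudek, Grytczuk, and Ruciński but to extract the improved constant $0.0989$ via a careful computer-assisted pattern count over windows of size $k$, for a sufficiently large constant $k$. I would fix $k$ (as large as is computationally feasible), split $\Pi_n$ into halves $\Pi^L, \Pi^R$, and let $A_0$ (respectively $B_0$) be the extremal positions of $\Pi^L$ (respectively $\Pi^R$), each of size $n/3 + o(n)$ a.a.s.\ by Stanley's result. Then augment $A$ by the extrema of $\Pi^R \setminus B_0$ that preserve the alternating pattern of $A_0$, obtaining $A_1$, and symmetrically obtain $B_1$; the final alternating twins are $A_0 \cup A_1$ and $B_0 \cup B_1$. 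The claim reduces to showing $\mathbb E[|A_1|], \mathbb E[|B_1|] \ge 0.0989\, n - o(n)$.

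The key reduction is that whether a position $i$ of $\Pi^R$ lies in the residual $\Pi^R \setminus B_0$ depends only on the relative order of $\Pi$ on a width-$3$ window around $i$, so whether $i$ is extremal within the residual depends only on the relative order of $\Pi$ on a window of width roughly $2k+1$ around $i$, \emph{provided} the two nearest residual neighbors of $i$ in $\Pi^R$ lie within the window. Since consecutive non-extremal positions occur with positive density, the probability of exceeding the window decays as $O(c^k)$ for some $c<1$, which is negligible. Compatibility of the extremum type of $i$ with the already-committed portion of $A_0 \cup A_1$ is also local if one processes residual candidates greedily in increasing index order and carries only the type (max/min) of the most recently committed one. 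I would thus define for each interior $i$ an indicator $X_i$ depending only on the pattern of $\Pi$ in a window of size $2k+1$ centered at $i$, which satisfies $X_i \le \ind[i \in A_1]$ up to $O(c^k)$ error.

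The main computation enumerates all $(2k+1)!$ patterns, simulates the greedy two-pass construction on each, and averages to produce a constant $c_k := \mathbb E[X_i]$. Linearity of expectation then gives $\mathbb E[|A_1|] \ge c_k \cdot n/2 - o(n)$, and the target $0.0989\, n$ will be attained for the chosen $k$, with the empirical ceiling $0.1006$ suggesting $c_k \to 2 \cdot 0.1006$ as $k$ grows. Concentration is then routine: apply Azuma--Hoeffding to the Doob martingale revealing $\Pi(1), \dots, \Pi(n)$ one entry at a time, noting that swapping one exposed value alters $|A|$ and $|B|$ by $O(k)=O(1)$, which yields an $O(\sqrt{n}) = o(n)$ fluctuation and upgrades the expectation bound to an a.a.s.\ bound.

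The hard part will be the pattern enumeration itself, since $(2k+1)!$ grows very quickly and one must pick $k$ large enough to close most of the gap between the old constant and the empirical ceiling $0.1006$. A more subtle obstacle is the bookkeeping of extremum-type compatibility at the join between $\Pi^L$ and $\Pi^R$, and more broadly the precise specification of the greedy acceptance rule: I would handle this by insisting that the only state carried across the greedy scan is the parity (max versus min) of the last accepted extremum, so that the state fits inside the window and the pattern enumeration fully captures the alternating constraint without hidden long-range dependencies.
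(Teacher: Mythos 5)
Your overall strategy coincides with the paper's: the Dudek--Grytczuk--Ruci\'nski two-pass construction, a reduction of the per-position survival event to a local pattern, and Azuma--Hoeffding with an $O(1)$ Lipschitz constant to upgrade the expectation bound to an a.a.s.\ one. Those parts are sound. One simplification you are missing: since the longest alternating subsequence of any sequence is realized by its local extrema, the second pass can simply take the local extrema of the subsequence of sloped (non-extremal) positions; a sloped position $r$ survives if and only if its nearest sloped neighbours on the two sides are both above or both below $\pi_r$. This criterion is stateless, so the greedy scan and the carried max/min parity you worry about are unnecessary, and the boundary mismatch between $A_0$ and $A_1$ costs only $O(1)$.

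The genuine gap is in the computation that is supposed to produce the number $0.0989$. To get within $0.0017$ of the empirical ceiling $0.1006$ you must account for configurations in which the nearest sloped neighbours of $r$ lie up to roughly $13$ positions away on each side, because the truncation error decays only like the density of alternating patterns, i.e.\ like $(2/\pi)^{k}$; the paper indeed sums over run lengths $k_1,k_2\le 13$, which corresponds to windows of combined length up to $25$. Brute-force enumeration of all $(2k+1)!$ window patterns is therefore hopeless ($25!\approx 1.6\times10^{25}$), and any computationally feasible window (total length $13$ or so) leaves a truncation loss of several percent, landing far below $0.0989$. The missing idea is the compression of the enumeration: classify the alternating run on each side of $r$ by the sufficient statistics (its length $k_i$, the rank $p_i$ of $\pi_r$ within it, and one comparison bit), enumerate the two sides \emph{independently} (a $13!$-scale computation each), and count the interleavings of the two runs' value sets by a product of binomial coefficients instead of enumerating all $(k_1+k_2-1)!$ joint orderings. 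Without this factorization, or an equivalent dynamic-programming device, the proposed computation cannot certify the stated constant.
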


\begin{proof}
    Using a computer, we group the permutations $p$ of $\{1,2,\dots,k\}$ with $p_1 < p_2 > p_3 < \dots p_{k-1}$ and $p_{k-2}$, $p_{k-1}$, $p_k$ monotonic by the following characteristics simultaneously:
    \begin{itemize}
        \item $p_1$,
        \item the length $k$, and
        \item whether $p_{k-1} > p_1$.
    \end{itemize}
    Let $c_{p_1, k, X}$ be the number of such permutations.

    Call an index $r$ or a position $\pi_r$ \emph{sloped} if $\pi_{r-1}, \pi_r, \pi_{r+1}$ are monotonic. Observe that, as $n\to \infty$, for $\pi \sim \Pi_n$ with $\pi_r$ sloped, the probability that the closest sloped positions to the left and right are both higher or both lower than $\pi_r$ is exactly
    \[
    2 \cdot \sum_{k_1, k_2 \geq 3} \sum_{\substack{1 \leq p_1 \leq k_1 \\ 1 \leq p_2 \leq k_2}} \binom{k_1 - p_1 + p_2 - 1}{k_1 - p_1}\cdot \binom{k_2 - p_2 + p_1 - 1}{k_2 - p_2} \cdot \frac{2c_{p_1,k_1,\mathrm{True}} c_{p_2,k_2,\mathrm{False}}}{(k_1 + k_2 - 1)!},
    \]
    as the summation variables and the booleans for relative ordering provide sufficient statistics for the likelihood of the pattern's appearance. These fix the relative position of $\pi_r$ among the block of $k_1 + k_2 - 1$. Thus we find $\Pr[\pi_r$ is extremal after removing all peaks/lows$]$ by summing over possible pattern classes for the left and right subpermutations and enumerating the possible overall orderings.

    Via a computer computation, we find that this summation is at least $0.0989$ by taking only $k_1, k_2 \leq 13$. 

    We may see that $\alpha(n)$ is well-concentrated via the following standard Azuma-Hoeffding argument:

    Reveal the order of the $k$th prefix of $\Pi_n$ at the $k$th step, and the expectation of the size of the largest subpermutation forms a martingale. But note that for any permutation, shifting the position of the $k$th element in $A$ can only decrease the number of local extrema in $A$ by $3$, and this creates a direct correspondence between permutations of different $k$th-prefixes. Using that this correspondence is reversible, by symmetry changing the position of the $k$th element in the $k$th prefix can also only increase the lengths of $A$ or $B$ by $3$. 

    In particular, by Azuma-Hoeffding, the probability that the maximal lengths of disjoint alternating subpermutations $A$ and $B$ differ from the expectation by more than $d$ is at most $\exp\left(\frac{-d^2}{2 \cdot 3 \cdot n}\right)$, so as $n\to \infty$, with high probability $\alpha(n)$ differs from its mean by only $o(n)$, as desired.
\end{proof}
\section{Concluding Remarks}

We proved that the maximum power of length $m$ in a random word in $[k]^n$ $R_{n,k,m}$ satisfies the property that $\mathbb E[R_{n,k,m}]-\left(\frac{\log n}{m\log k}+1\right)$ is upper-bounded by a universal constant. We may also ask: is it also lower-bounded by a universal constant?

Next, we showed that the probability that a random permutation of $[rk]$ contains $r$-twins of length $k$ converges to $1$ as $k$ stays fixed and $r\to\infty$. Again, we may ask: what explicit lower bounds on this probability exist?

Finally, we showed that the length of the longest pair of disjoint alternating permutations in $\Pi_n$ $\alpha_n\ge(1/3+0.0989\dots)n$ using a computer-assisted approach. (Pseudocode may be found in the appendix.) However, it is still unclear what the greatest constant $c$ such that $\alpha_n\ge(c-o(1))n$ is. Dudek et al. \cite{dudek2021perms} upper bounds $\alpha_n\leq (1/2 - 1/120 + o(1)) n$, slightly improving upon the trivial bound $n/2$. It is likely possible to further improve the lower bound by using a different method to construct the alternating twins.

\noindent\textbf{Acknowledgments.} This research was done at the Massachusetts Institute of Technology. We thank Kuikui Liu for this opportunity.

\bibliographystyle{alpha}
\bibliography{sample}

\section{Appendix}
\subsection{Pseudocode for the alternating-twins computation}

We collect the $c_{p_1, k, X}$ as follows:

\begin{algorithm}[H]
  \caption{\textproc{CountGood}$(n)$}
  \begin{algorithmic}[1]
    \Require $n \ge 3$
    \State $C \gets \text{empty map from keys to integers}$
    \For{$k \gets 3$ \textbf{to} $n$}                         \Comment{permutation length}
      \State $Numbers \gets \{1,\dots,k\}$
      \ForAll{$p \in$ \Call{Permutations}{$Numbers$}}
        \If{$p_2 > p_1$}                                      \Comment{keep only ascending first pair}
          \If{\Call{CheckGood}{$p$}}
            \State $key \gets (p_1,\,k,\, [\,p_{k-1} > p_1\,])$
            \State $C[key] \gets C[key] + 1$
          \EndIf
        \EndIf
      \EndFor
    \EndFor
    \State \Return $C$
  \end{algorithmic}
\end{algorithm}

\vspace{1em}

\begin{algorithm}[H]
  \caption{\textproc{CheckGood}$(p)$}
  \begin{algorithmic}[1]
    \Require $|p| \ge 3$
    \State $k \gets |p|$
    \For{$i \gets 2$ \textbf{to} $k-2$}        
      \If{$i$ \textbf{even} \textbf{and} $\neg(p_i>p_{i-1}\,\land\,p_i>p_{i+1})$}
        \State \Return \textbf{false}
      \ElsIf{$i$ \textbf{odd} \textbf{and} $\neg(p_i<p_{i-1}\,\land\,p_i<p_{i+1})$}
        \State \Return \textbf{false}
      \EndIf
    \EndFor
    \State \Return $(p_{k-3}<p_{k-2}<p_{k-1}) \;\lor\; (p_{k-3}>p_{k-2}>p_{k-1})$
  \end{algorithmic}
\end{algorithm}

\end{document}